\newtheorem{theorem}{Theorem}[section]
\newtheorem{lemma}[theorem]{Lemma}
\newtheorem{prop}[theorem]{Proposition}
\def\Om{\Omega}       \def\th{\theta}
\def\al{\alpha}       
\def\be{\beta}
\def\vi{\varphi}      \def\io{\iota}
       \def\om{\omega}
 \def\mZ{\mathbb Z}	\def\mQ{\mathbb Q}
 \def\kT{\mathcal T}	\def\mF{\mathbb F}
\def\hH{\hat{H}}
\def\tA{\tilde{A}}	\def\tM{\tilde{M}}
\def\niso{\not\simeq}
\def\Tr{\mathop\mathrm{Tr}\nolimits}
\def\rad{\mathop\mathrm{rad}\nolimits}
\def\Hom{\mathop\mathrm{Hom}\nolimits}
\def\ker{\mathop\mathrm{Ker}\nolimits}
\def\im{\mathop\mathrm{Im}\nolimits}
\def\cok{\mathop\mathrm{Coker}\nolimits}
\def\CM{\mathrm{CM}}
\def\lat{\mbox{-}\mathrm{lat}}
\def\ind{\mbox{-}\mathrm{ind}}
\def\xarr{\xrightarrow}
\def\sb{\subset}         \def\sp{\supset}
\def\spe{\supseteq}      \def\sbe{\subseteq}
\def\*{\otimes}   		\def\bop{\bigoplus}
\def\={\setminus}	\def\+{\oplus}
\def\setsuch#1#2{\left\{\,#1\mid #2\,\right\}}
\begin{document}

\def\cm{Cohen--Macaulay}
\def\AR{Auslander--Reiten }
\def\oc{one-to-one correspondence}
\def\iff{if and only if }

\author[Yu. Drozd]{Yuriy A. Drozd}
\title{Cohomologies of the Kleinian 4-group}
\address{National Academy of Sciences of Ukraine \\ Institute of Mathematics\\ Tereschenkivska str. 3\\ 01024 Kyiv\\ Ukraine}
\email{y.a.drozd@gmail.com}
\urladdr{www.imath.kiev.ua/$\sim$drozd}
\author[A. Plakosh]{Andriana I. Plakosh}
\address{National Academy of Sciences of Ukraine \\ Institute of Mathematics\\ Tereschenkivska str. 3\\ 01024 Kyiv\\ Ukraine}
\email{andrianaplakoshmail@gmail.com}

\subjclass{20J06}

\keywords{Cohomologies, $G$-lattice, Kleinian $4$-group, \AR quiver}

\begin{abstract}
 We calculate explicitly the cohomologies of all $G$-lattices, where $G$ is the Kleinian $4$-group.
\end{abstract}

\maketitle

\section*{Introduction}

 The calculation of cohomologies of a given group is an important and interesting, but usually a cumbersome problem. 
 So only some cases are known where
 such calculations were made for a rather wide class of modules. If the group is finite, a special interest is in cohomologies of
 \emph{lattices}, i.e. $G$-modules which are finitely generated and torsion free as groups. They are of importance, for example, in the
 theory of crystallographic groups and of Chernikov groups. Certainly, if we want to know all cohomologies of lattices, one would like to have
 a classification of $G$-lattices. In the case of finite $p$-groups, such classification is only known for cyclic groups of order $p$ and $p^2$ for a
 prime $p$ \cite[\S\,34B,\,\S\,34C]{cr}, for the cyclic group of order $8$ \cite{yak} and for the Kleinian $4$-group \cite{naz,pl}. In other cases
 such a classification is \emph{wild}, i.e. includes a description of representations of all finite dimensional algebras.
 In the first above mentioned case there are only finitely many indecomposable representations, the cohomologies $\hH^n(G,m)$ are periodic 
 of period $2$, so the answer can be easily obtained. It was used in \cite{gs}, where a complete list of Chernikov $p$-groups with the cyclic top of order 
 $p$ or $p^2$ was obtained. For the Kleinian group the question becomes much more complicated, since, first, there are infinitely many non-isomorphic 
 indecomposable lattices, and, second, the cohomologies are no more periodic. In this paper we use the description of indecomposable lattices from 
 \cite{pl} and some general facts about cohomologies of $p$-groups and give a complete description of cohomologies of lattices over the Kleinian group.
 
 \medskip\noindent
 It is known that for the Kleinian 4-group $G$ every $\mZ_2G$-lattice, where $\mZ_2$ is the ring of $p$-adic integers, coincides with the $2$-adic 
 completion $\hat{M}_2$ of some $\mZ G$-lattice $M$, and $\hat{M}_2\simeq\hat{N}_2$ \iff $M\simeq N$ (cf.~\cite{naz}). 
 As $\hH^n(G,M)=\hH^n(G,\hat{M}_2)$, we only have to consider the $\mZ_2 G$-lattices.

 \section{Preliminaries}
 \label{s1} 
 
 In this section we establish some results on \AR translate and cohomologies of $p$-groups.
 
 \medskip\noindent
Let $R$ be a local complete noetherian ring, equidimensional of Krull dimension 1 and without nilpotent elements, $R\lat$ be the category
of $R$\emph{-lattices}, i.e. maximal \cm\ $R$-modules or, equivalently, torsion free $R$-modules. It is known that $R$ has a  \emph{canonical module} 
$\om_R\in R\lat$ such that the functor $D=\Hom_R(\_\,,\om_R)$ is an exact duality on the category $R\lat$ \cite[Corollary 3.3.8]{bh}.
In particular, it induces a bijection on the set $R\ind$ of isomorphism classes of indecomposable $R$-lattices. 

 Recall that an \emph{\AR sequence} in $R\lat$ is a non-split exact sequence
\begin{equation}\label{e1} 
0\to M'\xarr\be N\xarr\al M\to0,	\tag{AR}
\end{equation}
 where $M$ and $M'$ are indecomposable $R$-lattices and
\begin{itemize}
\item  for every homomorphism $\xi:L\to M$, where $L$ is an indecomposable $R$-lattice and $\xi$ is not an isomorphism, there is
$\xi':L\to N$ such that $\xi=\al\xi'$;
\item for every homomorphism $\eta:M' \to L$, where $L$ is an indecomposable $R$-lattice and $\eta$ is not an isomorphism, there is
$\eta':N\to L$ such that $\eta=\eta'\be$.\!%
\footnote{\,It is known (see, for instance, \cite{ars}) that each of these two conditions implies the other.}
\end{itemize}
In this case $M$ and $M'$ define one another up to an isomorphism. They are denoted: $M'=\tau_RM$ and $M=\tau_R^{-1}M'$.
$\tau_R$ is called the \emph{\AR translate} for $R$-lattices.
It is known \cite{ar} that for every indecomposable $R$-lattice $M\niso R$ (for every indecomposable $R$-lattice $M'\niso \om_R$)
there is an \AR sequence \eqref{e1}, so $\tau_RM$ (respectively, $\tau^{-1}_RM'$) is defined. If $R$ is Gorenstein, i.e. $\om_R\simeq R$,
$\tau_R$ induces a bijection on the set $R\ind{\=}\{R\}$. 

\medskip\noindent
 Let in the \AR sequence~\eqref{e1} $N=\bop_{i=1}^mN_i$, where $N_i$ are indecomposable, $\al_i:N_i\to M$ and $\be_i:M'\to N_i$ be the components
 of $\al$ and $\be$ with respect to this decomposition. The \emph{\AR quiver} of the category $R\lat$ is a quiver whose vertices are the isomorphism 
 classes of indecomposable $R$-lattices and arrows are given by the following rules:
 \begin{itemize}
 \item If $M\niso R$, the arrows ending in $M$ are just $\al_i:N_i\to M$.
 \item If $M'\niso \om_R$, the arrows starting in $M$ are just $\be_i:M'\to N_i$.
 \item If $\rad R=\bop_{i=1}^kQ_i$, where $Q_i$ are indecomposable, the arrows ending in $R$ are just the embeddings $\io_i:Q_i\to R$.
 \item The arrows starting in $\om_R$ are just $D\io_i:DQ_i\to\om_R$.
 \end{itemize}

\noindent
We denote by $\Om M$ the \emph{syzygy} of $M$, i.e. the kernel of an epimorphism $\vi:P\to M$, where $P$ is projective and
$\ker\vi\sbe\rad P$. Again, if $R$ is Gorenstein, $\Om$ induces a bijection on the set $R\ind{\=}\{R\}$. So in this case $\Om^{-1}M$ is well 
defined for any indecomposable lattice $M\niso R$.

\begin{prop}\label{p1} 
If $R$ is Gorenstein, $\tau_RM\simeq \Om M$ for every indecomposable $R$-lattice $M\niso R$.
\end{prop}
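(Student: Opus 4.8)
The plan is to derive the statement from the classical transpose description of the \AR translate and then to simplify it, exploiting that $\om_R\simeq R$ makes $D=\Hom_R(\_\,,R)$ an exact duality on $R\lat$, as recalled above. Concretely, I would start from the known formula: for a complete one-dimensional Gorenstein local ring $R$ and an indecomposable lattice $M\niso R$,
\[
\tau_RM\simeq(\Om\,\Tr M)^*,
\]
where $(\_)^*=\Hom_R(\_\,,R)$ and $\Tr M=\cok(f^*)$ is the Auslander transpose computed from a minimal projective presentation $P_1\xarr{f}P_0\xarr{p}M\to0$; this is the case $\dim R=1$, $\om_R\simeq R$ of the general description of $\tau_R$ via the transpose \cite{ar}.

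Next I would unwind the right-hand side. Applying $(\_)^*$ to the minimal presentation yields the exact sequence
\[
0\to M^*\xarr{p^*}P_0^*\xarr{f^*}P_1^*\to\Tr M\to0.
\]
Since $\im f\sbe\rad P_0=(\rad R)P_0$, the matrix of $f$ has all entries in $\rad R$, hence so does the matrix of $f^*$, so $\im f^*\sbe\rad P_1^*$; therefore $P_1^*\to\Tr M$ is a projective cover and $\Om\,\Tr M=\im f^*\simeq\cok p^*$. This produces a short exact sequence of \emph{lattices}
\[
0\to M^*\xarr{p^*}P_0^*\to\Om\,\Tr M\to0,
\]
all three terms lying in $R\lat$: $M^*$ and $P_0^*$ are lattices because $D$ is a duality on $R\lat$ and $\om_R\simeq R$, while $\Om\,\Tr M$, being a submodule of the free module $P_1^*$, is torsion free.

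Now I would apply the exact duality $D=(\_)^*$ to this sequence. Using $DD\simeq\mathrm{id}$ on $R\lat$ — so that $M^{**}\simeq M$, $P_0^{**}\simeq P_0$, and $p^{**}$ is identified with $p$ — one obtains an exact sequence
\[
0\to(\Om\,\Tr M)^*\to P_0\xarr{p}M\to0.
\]
Because $p$ was chosen with $\ker p\sbe\rad P_0$, this is a minimal projective cover of $M$, so $(\Om\,\Tr M)^*\simeq\ker p=\Om M$. Combined with the first step, $\tau_RM\simeq(\Om\,\Tr M)^*\simeq\Om M$, which is the assertion.

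The step I expect to require the most care is the identification $\Om\,\Tr M=\im f^*$, i.e. that the $R$-dual of a \emph{minimal} projective presentation over a local ring is again minimal, so that the syzygy of $\Tr M$ is obtained exactly rather than merely up to a free summand; once this and the remark that every module occurring is a lattice are established, the remainder is a formal application of the duality $D$. The only input not contained in the excerpt is the transpose formula quoted at the outset.
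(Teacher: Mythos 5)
Your argument is correct and follows essentially the same route as the paper: both start from the Auslander--Reiten formula $\tau_RM\simeq D\Om\Tr M$ of \cite{ar}, observe that the $D$-dual of a minimal projective presentation is again minimal (so $\Om\Tr M=\im D\psi$, which is your $\cok p^*$), and then apply the exact duality $D=\Hom_R(\_\,,\om_R)\simeq(\_)^*$ once more to recover $\im\psi=\ker\vi=\Om M$. The only difference is presentational: you dualize the short exact sequence $0\to M^*\to P_0^*\to\Om\Tr M\to0$, while the paper computes $D(\im D\psi)=\im\psi$ directly.
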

\begin{proof}
  For an indecomposable $R$-lattice $M\niso R$, consider a minimal projective presentation $P_1\xarr\psi P_0\xarr\vi M\to0$, i.e. an
  exact sequence, where $P_0,P_1$ are projective and both $\ker\vi\sbe\rad P_0$ and $\ker\psi\sbe\rad P_1$. Then $\ker\vi=\Om M$. 
  Let $N=\ker\psi$. Set $\Tr M=\cok D\psi=DN$. Then there is an \AR sequence \eqref{e1}, where $M'=D\Om\Tr M$ \cite{ar}. Since the
  exact sequence $DP_0\xarr{D\psi}DP_1\to DN\to 0$ is a minimal projective resolution of $DN$, we have that $\Om\Tr M=\im D\psi$ and
  $D\Om\Tr M=D(\im D\psi)=\im\psi=\ker\vi=\Om M$.
\end{proof}

\noindent
 Recall that if $R$ is Gorenstein and not regular there is a unique ring $A\sp R$ such that $A/R\simeq R/\rad R$, $A\in R\lat$ (hence $A\lat$ is a full
 subcategory of $R\lat$) and every indecomposable $R$-lattice $M\niso R$ is an $A$-lattice. $A$ is called the \emph{minimal overring} of $R$ 
 (see \cite{dr}). By duality, $DA\simeq\rad R$.

\begin{prop}\label{p2} 
 Let the ring $R$ be Gorenstein and non-regular, $A$ be the minimal overring of $R$.
 \begin{enumerate}
  \item If $0\to M'\xarr\be N\xarr\al M\to0$ is an \AR sequence in $A\lat$,  it is also an \AR sequence in $R\lat$.
  \item Let $M$ be an indecomposable $A$-lattice. If $M\niso A$, then $\tau_RM=\tau_AM$, and if $M\niso\om_A $, then
  $\tau^{-1}_RM=\tau^{-1}_AM$.
  \end{enumerate} 
\end{prop}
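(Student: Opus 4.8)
The plan is to reduce (2) to (1) via the uniqueness of \AR sequences, so the real content is (1). Two facts from the excerpt are used throughout: every indecomposable $R$-lattice is either isomorphic to $R$ or is an $A$-lattice, and $R$ is Gorenstein, so $\om_R\simeq R$. I would first record the auxiliary remark that $\Hom_R(L,M)=\Hom_A(L,M)$ whenever $L$ and $M$ are $A$-lattices. Indeed, since $R$ is reduced of Krull dimension $1$, any $a\in A$ can be written $a=u/v$ with $u,v\in R$ and $v$ a non-zerodivisor; for an $R$-linear map $\xi\colon L\to M$ and $x\in L$ one then has $v\,\xi(ax)=\xi(vax)=\xi(ux)=u\,\xi(x)=v\,(a\xi(x))$, and cancelling the non-zerodivisor $v$ on the torsion-free module $M$ forces $\xi(ax)=a\xi(x)$. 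Two consequences: taking $L=M$ gives $\mathrm{End}_R(M)=\mathrm{End}_A(M)$ for an $A$-lattice $M$, so $M$ is indecomposable in $R\lat$ \iff it is indecomposable in $A\lat$, whence the ends $M,M'$ of the given sequence are indecomposable $R$-lattices; and any $R$-linear section of $\al$ would be $A$-linear, so the sequence remains non-split in $R\lat$.

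For (1) it now suffices, by the footnote, to verify one of the two bullet conditions of \eqref{e1} for the sequence regarded in $R\lat$; I would check the first. Let $L$ be an indecomposable $R$-lattice and $\xi\colon L\to M$ not an isomorphism. If $L\simeq R$, then $L$ is free, hence projective, and $\al$ is an epimorphism, so $\xi$ lifts along $\al$. If $L$ is an $A$-lattice, then by the auxiliary remark $\xi$ is $A$-linear and is not an isomorphism of $A$-lattices either, so the \AR property of the sequence in $A\lat$ provides $\xi'\colon L\to N$ with $\xi=\al\xi'$, and $\xi'$ is $A$-linear, in particular $R$-linear. Since every indecomposable $R$-lattice falls into one of these two cases, the first bullet holds in $R\lat$; together with non-splitness and the indecomposability of $M$ and $M'$ this shows the sequence is an \AR sequence in $R\lat$. (Alternatively the second bullet can be checked directly: for $L\simeq R$ the obstruction to extending a map $M'\to R$ along $\be$ lies in $\mathrm{Ext}^1_R(M,R)$, which vanishes since $R$ is Gorenstein of dimension $1$ and $M$ is a lattice, while for $L$ an $A$-lattice one again invokes the $A\lat$ property.)

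To obtain (2), let $M$ be an indecomposable $A$-lattice with $M\niso A$, so that $\tau_AM$ is defined by an \AR sequence $0\to\tau_AM\to N\to M\to0$ in $A\lat$. By (1) this is an \AR sequence in $R\lat$; moreover $M$ cannot be isomorphic to $R$ (such an isomorphism would transport the $A$-module structure onto $R$, forcing $A\sbe R$), so $M\niso R\simeq\om_R$ and $\tau_RM$ is defined. The uniqueness of \AR sequences then yields $\tau_RM\simeq\tau_AM$. The statement for $\tau^{-1}$ is the same argument applied, when $M\niso\om_A$, to the \AR sequence $0\to M\to N'\to\tau_A^{-1}M\to0$ in $A\lat$.

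I expect the only point needing care to be the Hom-invariance remark together with the treatment of the vertex $L\simeq R$ in the first bullet; this is precisely where the Gorenstein hypothesis (both in the form ``$R$ is free over itself'' and ``$\om_R\simeq R$'') and the defining property of the minimal overring are used. Everything else is formal given the results quoted in the excerpt.
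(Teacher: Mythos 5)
Your proposal is correct and follows essentially the same route as the paper: for part (1) you split an indecomposable $R$-lattice $L$ into the cases $L\simeq R$ (handled by projectivity of $R$) and $L$ an $A$-lattice (handled by the \AR property in $A\lat$), and you deduce part (2) from part (1) by uniqueness of \AR sequences. The only differences are cosmetic: you spell out the Hom-invariance $\Hom_R(L,M)=\Hom_A(L,M)$ (which the paper leaves implicit) and verify only one bullet condition via the footnote, whereas the paper also sketches the second bullet using $\im\eta\sbe\rad R$ and $\rad R\in A\lat$.
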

\begin{proof}
 (1) Let $L$ be an indecomposable $R$-lattice, $\xi:L\to M$ be not an isomorphism. If $L\niso R$, it is an $A$-lattice, hence there is a homomorphism 
 $\xi':L\to N$ such that $\xi=\al\xi'$. If $L=R$, such a homomorphism exists since $R$ is projective. Let now $\eta:M'\to L$ be not an
 isomorphism. Again, if $L\niso R$, there is $\eta':N\to L$ such that $\eta=\eta'\be$. If $L=R$, $\im\be\in\rad R$ and $\rad R\in A\lat$,
 which implies again the existence of $\eta'$.
 
 (2) is an immediate consequence of (1).
\end{proof}

\noindent
From now on $R=\mZ_pG$, where $G$ is a finite commutative $p$-group. It is local, Gorenstein, non-regular and $R/\rad R\simeq\mF_p$
(with the trivial action). Let $A$ be the minimal overring of $R$. 
 
 \begin{prop}\label{p5} 
  $\tau_R A\simeq \om_A$.
 \end{prop}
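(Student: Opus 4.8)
The plan is to route everything through $\rad R$, establishing $\tau_R A\simeq\Om A\simeq\rad R\simeq\om_A$. First I would record that $A$ is an indecomposable $R$-lattice (the minimal overring is local) and that $A\niso R$: an isomorphism $A\simeq R$ would make $A$ a cyclic $R$-module, say $A=R\al$, and since $1\in A$ we may take $\al=r_0^{-1}$ for some non-zero-divisor $r_0\in R$; then multiplication by $r_0$ identifies $A/R$ with $R/r_0R$, so that $R/r_0R\simeq A/R\simeq R/\rad R$ would force $\rad R=r_0R$ to be principal, contradicting the non-regularity of $R$. Hence Proposition~\ref{p1} applies and $\tau_R A\simeq\Om A$.

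Next I would compute the syzygy $\Om A$. Since $A/R\simeq R/\rad R$ is cyclic over $R$, choose $\th\in A$ whose image generates it; then $A=R+R\th$, so $A$ is generated over $R$ by $1$ and $\th$. Note that $\rad R$ is the conductor of $A$ into $R$ (it annihilates $A/R\simeq R/\rad R$), hence it is an ideal of $A$, so $(\rad R)\th\sbe\rad R$ and $(\rad R)A=\rad R\ne R$. The last equality gives $\mu_R(A)=\dim_{\mF_p}A/(\rad R)A=2$ via the exact sequence $0\to R/\rad R\to A/(\rad R)A\to A/R\to0$, so the epimorphism $\pi\colon R^2\to A$, $(r,s)\mapsto r+s\th$, is a projective cover. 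Its kernel consists of the pairs $(-s\th,s)$ with $s\th\in R$; but $s\th\in R$ forces $s\in\rad R$ (otherwise $s$ is a unit and $\th=s^{-1}(s\th)\in R$, a contradiction), and conversely every $s\in\rad R$ works, with first coordinate $-s\th\in(\rad R)\th\sbe\rad R$. Therefore $\ker\pi=\setsuch{(-s\th,s)}{s\in\rad R}\simeq\rad R$, and $\Om A\simeq\rad R$. (Equivalently, the horseshoe lemma applied to $0\to R\to A\to R/\rad R\to0$, together with $\Om R=0$ and $\Om(R/\rad R)=\rad R$, yields the same conclusion.)

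Finally I would identify $\om_A$. Since $R$ is Gorenstein, $\om_R\simeq R$; and since $A$ is a module-finite $R$-algebra of Krull dimension $1$, the base-change formula for canonical modules \cite{bh} gives $\om_A\simeq\Hom_R(A,\om_R)=DA$, which is isomorphic to $\rad R$ by the property $DA\simeq\rad R$ of the minimal overring recalled above. Combining the three steps gives $\tau_R A\simeq\Om A\simeq\rad R\simeq\om_A$.

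The step that I expect to need the most care is the syzygy computation: one must verify both that $\pi$ is a projective cover (equivalently $\mu_R(A)=2$) and that $\ker\pi$ is honestly isomorphic to $\rad R$ and not merely a submodule containing it, the crucial point being that the conductor $\rad R$ is an ideal of $A$, which is what confines the first coordinate $-s\th$ to $\rad R$. The last step is routine once the base-change result for canonical modules is brought in.
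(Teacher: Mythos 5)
Your proof is correct, but it follows a genuinely different route from the paper's. The paper's argument is a two-line indirect one: if $\tau_RA\simeq M$ with $M\niso\om_A$, then Proposition~\ref{p2} gives $A=\tau_R^{-1}M=\tau_A^{-1}M$, which is impossible because $A$ is projective in $A\lat$ and the right-hand term of an \AR sequence is never projective; the module $\tau_RA$ is never identified explicitly. You instead compute everything: $\tau_RA\simeq\Om A$ by Proposition~\ref{p1} (after verifying that $A$ is indecomposable and $A\niso R$, points the paper leaves implicit), then $\Om A\simeq\rad R$ from the minimal presentation $R^2\to A$, and finally $\om_A\simeq\Hom_R(A,\om_R)=DA\simeq\rad R$ by base change for canonical modules together with the fact $DA\simeq\rad R$ recorded in Section~\ref{s1}. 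Your syzygy computation is sound: the decisive points --- that $(\rad R)A=\rad R$, hence $\mu_R(A)=2$, and that $\setsuch{s\in R}{s\th\in R}=\rad R$ --- are all correctly justified, as is your argument that $A\simeq R$ would force $\rad R$ to be principal, contradicting non-regularity. The paper's proof buys brevity and needs no structure theory of $\Om A$; yours buys the explicit identification $\tau_RA\simeq\rad R\simeq\om_A$, which is more informative and is in fact consistent with the arrow $D\io_i:DQ_i\to\om_A$ in the \AR quiver. One small caveat: your parenthetical ``the minimal overring is local'' is not a general fact (for the node $k[[x,y]]/(xy)$ the minimal overring is the normalization $k[[x]]\times k[[y]]$, which is decomposable as a module); it does hold here, where one checks $A=\mZ_2+2\tA$ with $\tA$ the maximal order, but the indecomposability of $A$ deserves a justification specific to $R=\mZ_2G$ rather than an appeal to a general principle.
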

 \begin{proof}
  Otherwise $\tau_RA\simeq M$ for some $M\niso\om_A$. Hence $A=\tau_R^{-1}M=\tau_A^{-1}M$, which is impossible, since
  $A$ is a projective $A$-module.
 \end{proof}
 
\begin{prop}\label{p3} 
 Let $G=\prod_{i=1}^sG_i$, where $G_i$ are cyclic groups.
 \[
  \hH^n(G,A)\simeq \hH^n(G,\mF_p)\simeq
  \begin{cases} \displaystyle
   \binom{n+s-1}{s-1}\mF_p &\text{\rm if } n\ge0,\\
   \displaystyle
   \binom{s-n-2}{s-1}\mF_p &\text{\rm if } n<0.
  \end{cases}
 \]
\end{prop}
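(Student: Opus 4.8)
The statement has two halves --- the isomorphism $\hH^n(G,A)\simeq\hH^n(G,\mF_p)$ and the explicit value of $\hH^n(G,\mF_p)$ --- and I would treat them in turn. For the first half, recall that $A\sp R=\mZ_pG$ with $A/R\simeq R/\rad R\simeq\mF_p$ carrying the trivial $G$-action, so there is a short exact sequence of $\mZ_pG$-modules $0\to\mZ_pG\to A\to\mF_p\to0$. Since $\mZ_pG$ is free of rank one over itself it is cohomologically trivial, $\hH^n(G,\mZ_pG)=0$ for all $n\in\mZ$ (Shapiro's lemma, or directly). The associated long exact sequence of Tate cohomology then collapses to isomorphisms $\hH^n(G,A)\simeq\hH^n(G,\mF_p)$ in every degree; in particular all these groups are $\mF_p$-vector spaces, and it remains to compute $\dim_{\mF_p}\hH^n(G,\mF_p)$.

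In non-negative degrees I would reduce to the cyclic factors. For $G_i=\langle\si_i\rangle$ of order $p^{a_i}$ the standard $2$-periodic free resolution $\cdots\xarr{N_i}\mZ_pG_i\xarr{\si_i-1}\mZ_pG_i\xarr{N_i}\mZ_pG_i\xarr{\si_i-1}\mZ_pG_i\to\mZ_p\to0$, with $N_i=1+\si_i+\dots+\si_i^{p^{a_i}-1}$, has the property that after applying $\Hom_{\mZ_pG_i}(-,\mF_p)$ (resp.\ $-\otimes_{\mZ_pG_i}\mF_p$) every differential becomes zero, because $\si_i-1$ acts as $0$ on the trivial module and $N_i$ as $p^{a_i}=0$. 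Hence $\dim_{\mF_p}H^n(G_i,\mF_p)=\dim_{\mF_p}H_n(G_i,\mF_p)=1$ for all $n\ge0$. The external tensor product over $\mZ_p$ of these $s$ resolutions is a free resolution of $\mZ_p$ over $\mZ_pG$, and, the coefficient module being a field, the Künneth formula carries no correction terms; it gives $\dim_{\mF_p}H^n(G,\mF_p)=\#\setsuch{(n_1,\dots,n_s)\in\mZ^{\,s}}{n_i\ge0,\ \sum_i n_i=n}=\binom{n+s-1}{s-1}$. For $n>0$ this is $\hH^n(G,\mF_p)$; for $n=0$ one computes directly $\hH^0(G,\mF_p)=\mF_p^G/\bigl(\sum_{g\in G}g\bigr)\mF_p=\mF_p/0=\mF_p$, of dimension $\binom{s-1}{s-1}=1$, matching the formula.

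In negative degrees, $\hH^n(G,\mF_p)\simeq H_{-n-1}(G,\mF_p)$ for $n\le-2$; the same Künneth computation for homology (equivalently, universal coefficients over $\mF_p$) gives $\dim_{\mF_p}H_k(G,\mF_p)=\binom{k+s-1}{s-1}$, so with $k=-n-1$ we get $\dim_{\mF_p}\hH^n(G,\mF_p)=\binom{s-n-2}{s-1}$. The case $n=-1$ is again direct: $\hH^{-1}(G,\mF_p)$ is the kernel of multiplication by $|G|$ on $\mF_p$ modulo the (trivial) augmentation submodule, hence $\mF_p$, of dimension $\binom{s-1}{s-1}=1$. Assembling the three ranges yields exactly the asserted formula.

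I do not foresee a genuine obstacle. The only care-points are that the external tensor product of the cyclic periodic resolutions is really a $\mZ_pG$-resolution and that Künneth applies without Tor/Ext terms --- both legitimate precisely because we work over the field $\mF_p$ --- together with the bookkeeping at the boundary between the two binomial expressions, which is harmless since $\binom{s-1}{s-1}=1$ agrees with either formula at $n=0$ and $n=-1$. The substance of the proposition is thus the classical Tate cohomology of a finite abelian $p$-group with trivial $\mF_p$-coefficients, packaged with the clean first-step reduction from $A$ to $\mF_p$.
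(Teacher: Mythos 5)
Your proof is correct and follows essentially the same route as the paper: the identical first step via $0\to R\to A\to\mF_p\to0$ and $\hH^n(G,R)=0$, and then the same resolution of $\mZ_p$ over $\mZ_pG$ (the tensor product of the periodic resolutions of the cyclic factors, which is exactly the resolution the paper cites from its reference \cite{dp}) with the observation that all differentials vanish on $\mF_p$ because their images lie in the radical. The only cosmetic difference is that you handle negative degrees through the identification $\hH^n\simeq H_{-n-1}$ plus K\"unneth for homology, whereas the paper reads the ranks off the complete resolution directly; both give $\binom{s-n-2}{s-1}$.
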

\begin{proof}
 The first isomorphism follows from the exact sequence $0\to R\to A\to \mF_p\to 0$, since $\hH^n(G,R)=0$. The cohomologies of $\mF_p$
 can be easily calculated using the free resolution of $\mZ$ described in \cite{dp}. Namely, the latter has a free $R$-module of rank  
 $\binom{n+s-1}{s-1}$ as the $n$-th component and the image of the differential $d_n$ is in the radical, which implies the result.
\end{proof}

\begin{prop}\label{p4} Let $M\in\CM(R),\,M\niso R$. Then
\[
 \hH^n(G,M)\simeq\hH^{n+1}(G,\tau_RM)\simeq\hH^{n-1}(G,\tau^{-1}_RM)
\]
\end{prop}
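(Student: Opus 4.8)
The plan is to reduce both isomorphisms to a single instance of dimension shifting for Tate cohomology, using Proposition~\ref{p1} to replace the Auslander--Reiten translate by the syzygy. First I would record two standing reductions. Since $R=\mZ_pG$ is local, every projective $R$-module is free, and $\hH^n(G,R)=0$ for all $n\in\mZ$ (free modules are cohomologically trivial); moreover $\hH^n(G,\_\,)$, $\Om$ and $\tau_R$ are all additive and kill free direct summands. Hence it suffices to prove the statement for an \emph{indecomposable} lattice $M\niso R$. Note also that, $R$ being Gorenstein, $\om_R\simeq R$, so both $\tau_RM$ and $\tau_R^{-1}M$ are defined, the latter because $\tau_R$ is a bijection on $R\ind{\=}\{R\}$.

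Next I would prove the left-hand isomorphism. By Proposition~\ref{p1} we have $\tau_RM\simeq\Om M$, and by definition of the syzygy there is a short exact sequence $0\to\Om M\to P\to M\to0$ with $P$ free. The long exact sequence of Tate cohomology attached to it reads
\[
 \cdots\to\hH^n(G,P)\to\hH^n(G,M)\xarr{\ \partial\ }\hH^{n+1}(G,\Om M)\to\hH^{n+1}(G,P)\to\cdots,
\]
and since $\hH^n(G,P)=\hH^{n+1}(G,P)=0$, the connecting homomorphism $\partial$ is an isomorphism. Therefore $\hH^n(G,M)\simeq\hH^{n+1}(G,\Om M)\simeq\hH^{n+1}(G,\tau_RM)$.

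For the right-hand isomorphism I would apply what has just been established to the lattice $M'=\tau_R^{-1}M$, which is again indecomposable and not isomorphic to $R$. This gives $\hH^n(G,M')\simeq\hH^{n+1}(G,\tau_RM')=\hH^{n+1}(G,M)$; substituting $n-1$ for $n$ yields $\hH^{n-1}(G,\tau_R^{-1}M)\simeq\hH^n(G,M)$, which is the remaining assertion.

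I do not anticipate a genuine obstacle. The only points requiring care are bookkeeping: that passing to the indecomposable, free-summand-free part of $M$ alters neither $\hH^n$ nor $\tau_R$ (which legitimizes the first reduction), and that the long exact Tate cohomology sequence is available for an arbitrary short exact sequence of $R$-lattices (standard, since Tate cohomology of a finite group is a connected sequence of functors vanishing on projectives). The real content is supplied by Proposition~\ref{p1}: it converts the Auslander--Reiten translate, defined by a universal property, into the concrete syzygy that slots directly into the dimension-shifting sequence.
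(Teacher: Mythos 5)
Your proof is correct and follows the same route as the paper: the paper's entire proof is the remark that the claim ``follows immediately from Proposition~\ref{p1},'' i.e.\ exactly the identification $\tau_RM\simeq\Om M$ combined with the standard dimension-shifting long exact sequence of Tate cohomology for $0\to\Om M\to P\to M\to0$ with $P$ free. You have merely spelled out the details (vanishing of $\hH^n(G,P)$, the reduction to indecomposables, and obtaining the second isomorphism by applying the first to $\tau_R^{-1}M$), all of which is sound.
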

\begin{proof}
 It follows immediately from Proposition~\ref{p1}.
\end{proof}

\noindent
 Now $G$ is the Kleinian $4$-group: $G=\langle a,b\,|\,a^2=b^2=1,\,ab=ba\rangle$. Then $\mZ_2G$ has $4$ irreducible lattices $L_{uv}$, 
 where $u,v\in\{+,-\}$. Namely, $L_{uv}=\mZ_2$ as $\mZ_2$-module, $a$ acts as $u1$ and $b$ acts as $v1$. 
  
\begin{prop}\label{p6} 
\begin{align*}
  \hH^n(G,L_{++})&=
  \begin{cases}
   (|n|/2+1)\mF_2 &\text{\rm if $n\ne0$ is even},\\
  [|n|/2] \mF_2 &\text{\rm if $n$ is odd},\\
   \mZ/4\mZ &\text{\rm if } n=0,
  \end{cases} \\
  \intertext{and if $(u,v)\ne(+,+)$, then}
 \hH^n(G,L_{uv})&= ([(|n|+1)/2])\mF_2
\end{align*}
\end{prop}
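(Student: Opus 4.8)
The cleanest line of attack is a direct computation, exploiting that $G$ is a direct product of two copies of $C_2$ and that each $L_{uv}$ is an external tensor product of one–dimensional lattices. I would first reduce the workload: the group $\mathrm{Aut}(G)\simeq GL_2(\mF_2)$ acts transitively on the three non–trivial characters of $G$, hence permutes $L_{+-},L_{-+},L_{--}$ transitively, so that $\hH^n(G,L_{+-})\simeq\hH^n(G,L_{-+})\simeq\hH^n(G,L_{--})$ for all $n$; it therefore suffices to treat $L_{++}$ and, say, $L_{-+}$. Write $G=\langle a\rangle\times\langle b\rangle$ and $L_{uv}=M_u\otimes_{\mZ_2}M_v$, where for $C_2$ the lattice $M_+$ is trivial and $M_-$ the sign lattice, $\langle a\rangle$ acting on the first factor and $\langle b\rangle$ on the second.

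Next I would apply the K\"unneth formula (valid here since $G$ is finite and all modules are finitely generated): for $n\ge1$ it expresses $H^n(G,L_{uv})$ through tensor products $H^i(C_2,M_u)\otimes_{\mZ_2}H^j(C_2,M_v)$ with $i+j=n$ together with terms $\mathrm{Tor}_1^{\mZ_2}\bigl(H^i(C_2,M_u),H^j(C_2,M_v)\bigr)$ with $i+j=n+1$, and similarly in homology with the shift $m-1$. Substituting the standard values $H^i(C_2,\mZ_2)=\mZ_2,0,\mZ/2,0,\mZ/2,\dots$ and $H^i(C_2,M_-)=0,\mZ/2,0,\mZ/2,\dots$ (and the analogous homology groups), one observes that a copy of $\mZ_2$ occurs only as $H^0(C_2,\mZ_2)=H_0(C_2,\mZ_2)$; consequently every tensor– and $\mathrm{Tor}$–summand equals $\mZ/2\mZ$ except the single summand $\mZ_2\otimes_{\mZ_2}\mZ_2$ which appears in total degree $0$ and only when $u=v=+$. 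Counting how many $\mZ/2\mZ$'s arise in each total degree $n$ then yields precisely the multiplicities $|n|/2+1$ ($n$ even $\ne0$), $[|n|/2]$ ($n$ odd) for $L_{++}$ and $[(|n|+1)/2]$ for $L_{-+}$, for $n\ge1$; the negative degrees $n\le-2$ come out the same way via homology, using the standard identification $\hH^n(G,L_{uv})\simeq H_{-n-1}(G,L_{uv})$. Finally the two remaining Tate degrees are computed by hand: $\hH^0(G,L_{++})=L_{++}^{G}/N_GL_{++}=\mZ_2/4\mZ_2\simeq\mZ/4\mZ$ and $\hH^{-1}(G,L_{++})=\ker(N_G\colon\mZ_2\to\mZ_2)/I_G\mZ_2=0$, while for $(u,v)\ne(+,+)$ one has $L_{uv}^{G}=0$, so $\hH^0(G,L_{uv})=0$ and $\hH^{-1}(G,L_{uv})=L_{uv}/I_GL_{uv}\simeq\mZ/2\mZ$, all in accordance with the statement.

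An alternative that stays closer to the machinery of this section: since $L_{uv}/2L_{uv}\simeq\mF_2$ with trivial action for every $(u,v)$, the exact sequence $0\to L_{uv}\xarr{2}L_{uv}\to\mF_2\to0$ produces short exact sequences $0\to\hH^n(G,L_{uv})/2\to\hH^n(G,\mF_2)\to{}_2\hH^{n+1}(G,L_{uv})\to0$. Feeding in the dimensions $\dim_{\mF_2}\hH^n(G,\mF_2)=\dim_{\mF_2}\hH^n(G,A)$ known from Proposition~\ref{p3} (equal to $n+1$ for $n\ge0$ and $-n$ for $n<0$), one gets the recursion $d_n+d_{n+1}=\dim_{\mF_2}\hH^n(G,\mF_2)$ for the number $d_n$ of cyclic direct summands of $\hH^n(G,L_{uv})$, and solving it from the base values $d_0$ above recovers all the multiplicities.

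The one place where real care is needed — and the main obstacle in the second approach especially — is showing that $\hH^n(G,L_{uv})$ is \emph{elementary} abelian, with the sole exception $\hH^0(G,L_{++})\simeq\mZ/4\mZ$: the recursion only pins down the \emph{number} of cyclic summands, so in that approach one must still argue separately that no $\mZ/4\mZ$–summand occurs apart from $\hH^0(G,L_{++})$. In the direct K\"unneth computation this is automatic, since no $\mZ/4\mZ$ ever appears on the K\"unneth side; so I would take the K\"unneth computation as the main argument and regard the exact–sequence argument as a cross-check against Proposition~\ref{p3}.
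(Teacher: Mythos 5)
Your argument is correct, but it is genuinely different from what the paper does: the paper gives no computation at all here and simply quotes \cite[Theorem 4.3 \& Corollary 4.2]{dp}, whose proof rests on the explicit free resolution of $\mZ$ over $\mZ_pG$ for an abelian $p$-group $G$ (the same resolution invoked in the proof of Proposition~\ref{p3}). Your K\"unneth computation is essentially a structured repackaging of that resolution --- for $G=C_2\times C_2$ the resolution of \cite{dp} is the tensor product of the two periodic resolutions, and your bookkeeping of $H^i(C_2,M_u)\otimes H^j(C_2,M_v)$ and $\mathrm{Tor}$ terms reproduces exactly the rank count $\binom{n+s-1}{s-1}$-style combinatorics in degree $n$ --- but it has the advantage of being self-contained and of settling the group structure for free: since no $\mZ/4\mZ$ can arise from tensor or $\mathrm{Tor}$ of the cyclic groups involved, the only non-elementary value is the hand-computed $\hH^0(G,L_{++})=\mZ_2/4\mZ_2$. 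All the individual steps check out: the $\mathrm{Aut}(G)\simeq GL_2(\mF_2)$ reduction to a single nontrivial character is legitimate (cohomology is invariant under twisting by a group automorphism), the identification $\Hom_{G_1\times G_2}(P\otimes Q,M_u\otimes M_v)\simeq\Hom_{G_1}(P,M_u)\otimes\Hom_{G_2}(Q,M_v)$ holds because the resolutions are finitely generated free, the negative degrees are correctly handled via $\hH^n\simeq H_{-n-1}$ for $n\le-2$, and the two remaining Tate degrees $n=0,-1$ are done by hand with the right answers. Your closing remark is also well taken: the mod-$2$ long exact sequence argument alone only yields the recursion $d_n+d_{n+1}=\dim_{\mF_2}\hH^n(G,\mF_2)$ for the number of cyclic summands and cannot by itself exclude $\mZ/4\mZ$-summands, so it is rightly demoted to a cross-check against Proposition~\ref{p3}.
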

\begin{proof}
 It is a partial case of \cite[Theorem 4.3 \& Corollary 4.2]{dp}.\!%
 \footnote{\,Note that there is an obvious misprint in \cite[Theorem~4.3]{dp}: in the formula (4.4) there must be $|n|-2$ instead of $|n|-1$.}
\end{proof}

 \section{Cohomologies}
 \label{s2} 
 
 In this section $R=\mZ_2G$, where $G$ is the Kleinian $4$-group, $A$ is the minimal overring of $R$. Recall the structure of the \AR quiver 
 of $A\lat$. It follows from \cite{rog} and \cite{pl} that this quiver consists of the \emph{preprojective-preinjective} component and \emph{tubes}. 
 The preprojective-preinjective component has the form
 {\footnotesize
 \[
  \xymatrix@C=.65em@R=1.7em{ & L_{++}^2 \ar[ddr] && L_{++}^1 \ar[ddr] && L_{++} \ar[ddr] && L_{++}^{-1} \ar[ddr] && L_{++}^{-2} \ar[ddr]
  && L_{++}^{-3} \ar[ddr] \\
  & L_{+-}^2 \ar[dr] && L_{+-}^1 \ar[dr] && L_{+-} \ar[dr] && L_{+-}^{-1} \ar[dr] && L_{+-}^{-2} \ar[dr]
  && L_{+-}^{-3} \ar[dr] \\
 {\cdots\ \ {}} \ar[uur] \ar[ur] \ar[ddr] \ar[dr] && A^2 \ar[uur] \ar[ur] \ar[ddr] \ar[dr] && A^1 \ar[uur] \ar[ur] \ar[ddr] \ar[dr]  && A \ar[uur] \ar[ur] \ar[ddr] \ar[dr] 
  && A^{-1} \ar[uur] \ar[ur] \ar[ddr] \ar[dr] && A^{-2} \ar[uur] \ar[ur] \ar[ddr] \ar[dr] && {{}\ \ \cdots}\\
  & L_{-+}^2 \ar[ur] && L_{-+}^1 \ar[ur]  && L_{-+} \ar[ur] && L_{-+}^{-1} \ar[ur] && L_{-+}^{-2} \ar[ur]
  && L_{-+}^{-3} \ar[ur] \\
  & L_{--}^2 \ar[uur] && L_{--}^1 \ar[uur]  && L_{--} \ar[uur] && L_{--}^{-1} \ar[uur] && L_{--}^{-2} \ar[uur]
  && L_{--}^{-3} \ar[uur]
   }
 \]
 }
 Here $M^k$ denotes $\tau_R^kM$. In particular, $A^1=\om_A$. Propositions~\ref{p4} and~\ref{p6} imply the following 
 values of cohomologies of these lattices.
 \begin{theorem}\label{pj} 
 \begin{align*}
   \hH^n(G,A^k)&=
  \begin{cases}
   (n-k+1)\mF_2 &\text{\rm if } n\ge k,\\
   (k-n)\mF_2 &\text{\rm if }n<k;
  \end{cases}\\
   \hH^n(G,L_{++}^k)&=
  \begin{cases}
  (|n-k|/2+1) \mF_2 &\text{\rm if $n-k\ne0$ is even},\\
  [|n-k|/2]\mF_2 &\text{\rm if $n-k$ is odd},\\
   \mZ/4\mZ &\text{\rm if $n=k$};
  \end{cases}\\
  \intertext{and if $(u,v)\ne(+,+)$, then}
   \hH^n(G,L_{uv}^k)&=[(|n-k|+1)/2]\mF_2. && \qed
 \end{align*} 
 \end{theorem}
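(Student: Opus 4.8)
The plan is to reduce the whole statement to the cohomology groups already computed in Propositions~\ref{p3} and~\ref{p6}, using the dimension shift of Proposition~\ref{p4} as the only tool. First I would note that every vertex of the preprojective--preinjective component has the form $M^k=\tau_R^kM$ with $k\in\mZ$ and $M\in\set{A,L_{++},L_{+-},L_{-+},L_{--}}$, and that each such $M^k$ is an $A$-lattice, hence satisfies $M^k\niso R$ (the whole component lies in $A\lat$, which does not contain $R$). Therefore Proposition~\ref{p4} may be applied at every vertex, and since $R$ is Gorenstein the condition $M\niso R$ also covers the use of $\tau_R^{-1}$. Iterating Proposition~\ref{p4} exactly $|k|$ times --- with $\tau_R$ if $k>0$ and $\tau_R^{-1}$ if $k<0$ --- would give
\[
 \hH^n(G,M^k)\ \simeq\ \hH^{n-k}(G,M)\qquad\text{for all }n,k\in\mZ .
\]

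It then remains to substitute the base values at $k=0$. For $M=A$ I would invoke Proposition~\ref{p3} with $s=2$, where $\binom{m+s-1}{s-1}=m+1$ and $\binom{s-m-2}{s-1}=-m$; taking $m=n-k$ yields $(n-k+1)\mF_2$ for $n\ge k$ and $(k-n)\mF_2$ for $n<k$, which is the first asserted formula. For $M=L_{++}$ and for $M=L_{uv}$ with $(u,v)\ne(+,+)$ I would simply read off Proposition~\ref{p6} after replacing $n$ by $n-k$; in the $L_{++}$ case the exceptional value $\mZ/4\mZ$ appears precisely when $n-k=0$, i.e. $n=k$, as claimed.

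I do not expect any genuine difficulty: everything comes down to the legitimacy of iterating Proposition~\ref{p4}, i.e. to the fact that none of the intermediate lattices $\tau_R^jM$ equals $R$ (equivalently $\om_R$). This is immediate from the structure of the $A\lat$ component recalled above --- it sits inside $A\lat$, on which $\tau_R$ coincides with $\tau_A$ by Proposition~\ref{p2} --- so the hypotheses of Proposition~\ref{p4} hold at each step and the only remaining work is the entirely routine bookkeeping of the case distinctions.
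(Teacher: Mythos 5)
Your proposal is correct and follows exactly the route the paper intends: iterate Proposition~\ref{p4} to obtain $\hH^n(G,M^k)\simeq\hH^{n-k}(G,M)$ (which is legitimate since the whole preprojective--preinjective component lies in $A\lat$, so no intermediate lattice is isomorphic to $R$), then substitute the base values from Proposition~\ref{p3} with $s=2$ for $A$ and from Proposition~\ref{p6} for the $L_{uv}$. The paper leaves this as an immediate consequence, and your bookkeeping of the case distinctions matches its stated formulas.
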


\medskip\noindent
  To calculate the cohomologies of the lattices that belong to tubes, we need the following considerations.
 Let $\tA$ be the integral closure of $A$ in $A\*_{\mZ_p}\mQ_p$. For any $A$-lattice $M$ the lattice $\tM=\tA M$ is a direct sum
 $\bop_{u,v\in\{+,-\}}M_{uv}$, where $M_{uv}\simeq (r_{uv}M)L_{uv}$ for some integers $r_{uv}M$. 
 Note that every $\mZ_2$-submodule of $M_{uv}$ is actually an $\tA$-submodule.
 
 \begin{lemma}\label{lem} 
  If $M\niso L_{++}$ is an indecomposable $R$-lattice, then $\hH^0(G,M)=(r_{++}M)\mF_2$\!\!.
 \end{lemma}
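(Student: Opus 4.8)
The plan is to compute $\hH^0(G,M)=M^G$ directly from the $\tA$-decomposition $\tM=\bigoplus_{u,v}M_{uv}$, exploiting the fact that $M\sbe\tM$ as $G$-modules, so that $M^G=M\cap\tM^G$. Since $G$ acts on $M_{uv}$ through the character $(u,v)$, the invariants $M_{uv}^G$ equal $M_{uv}$ when $(u,v)=(+,+)$ and are $0$ otherwise; hence $\tM^G=M_{++}\simeq(r_{++}M)L_{++}$ as a $\mZ_2$-module, and $M^G=M\cap M_{++}$. The first step is therefore to identify this intersection: it is a $\mZ_2$-submodule of $M_{++}\cong\mZ_2^{\,r_{++}M}$, hence a lattice of some rank $r\le r_{++}M$, and the claim $\hH^0(G,M)=(r_{++}M)\mF_2$ will follow once I show that $\hH^0(G,M)=M^G/\mathrm{N}M$ — where $\mathrm N=\sum_{g\in G}g$ is the norm — is an $\mF_2$-vector space of dimension exactly $r_{++}M$.

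Next I would pin down the norm element $\mathrm N M$ inside $M^G$. Writing $a,b$ for the generators, the relevant trace operators are $1+a$, $1+b$ and $\mathrm N=(1+a)(1+b)$. On $M_{++}$ the element $g$ acts trivially, so $\mathrm N$ acts as multiplication by $4$; thus $\mathrm N\tM=4M_{++}$. The key input is the structure theory from \cite{pl} (equivalently Proposition~\ref{p3} with $s=2$, giving $\hH^0(G,A)=\mF_2$ and the shape of the \AR quiver): for an indecomposable $M$ different from $L_{++}$ and from the irreducible lattices $L_{uv}$, one has tight control of where $M$ sits between $\tM$ and its sublattices $(1\pm a)\tM$, $(1\pm b)\tM$. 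I expect that for such $M$ the intersection $M\cap M_{++}$ contains $2M_{++}$ (because the ``off-diagonal'' blocks feed a factor of $2$ into the $(+,+)$-block under the gluing that defines $M$ inside $\tM$), while $\mathrm N M=\mathrm N\tM\cap M=4M_{++}$; consequently $M^G/\mathrm N M$ has a subquotient $2M_{++}/4M_{++}\cong(r_{++}M)\mF_2$, and one checks there is nothing more, i.e. $M\cap M_{++}=2M_{++}$ exactly (not all of $M_{++}$), which is what forbids a free $\mZ_2$ summand in $\hH^0$ and makes the answer purely $2$-torsion of the stated rank. The case analysis splits according to whether $M$ lies in the preprojective-preinjective component or in a tube, but in both cases the relation between $M$, $\tM$ and the diagonal blocks is explicit.

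Concretely I would argue as follows. Since $\tM/M$ is a finite $2$-group and $M,\tM$ are both $G$-lattices, the long exact cohomology sequence of $0\to M\to\tM\to\tM/M\to0$ relates $\hH^0(G,M)$, $\hH^0(G,\tM)=\bigoplus\hH^0(G,M_{uv})$, and $\hH^*(G,\tM/M)$; and $\hH^0(G,L_{++})=\mZ/4\mZ$ while $\hH^0(G,L_{uv})=0$ for $(u,v)\ne(+,+)$ by Proposition~\ref{p6}. So $\hH^0(G,\tM)=(r_{++}M)(\mZ/4\mZ)$, and the task reduces to showing that the connecting and restriction maps cut this down to $(r_{++}M)\mF_2$ — equivalently, that the image of $\hH^0(G,\tM)\to\hH^0(G,\tM/M)$ kills exactly the index-$2$ subgroup of each $\mZ/4\mZ$ summand. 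This is where I would invoke the explicit normal forms for indecomposable $R$-lattices from \cite{pl}: the matrices describing the embedding $M\hookrightarrow\tM$ show that an element of $M_{++}$ lies in $M$ iff its coordinates satisfy congruences mod $2$ coming from the other blocks, which is precisely the statement $M\cap M_{++}=2M_{++}$ unless $M\simeq L_{++}$.

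The main obstacle will be the last point — proving $M\cap M_{++}=2M_{++}$ uniformly for every indecomposable $M\niso L_{++}$, including the tube lattices, rather than just $\supseteq 2M_{++}$. For the preprojective-preinjective component one can instead use Proposition~\ref{p4} and Theorem~\ref{pj}: the $\hH^0$ values there are already tabulated and visibly agree with $(r_{++}M)\mF_2$ once one records the ranks $r_{++}(A^k)=r_{++}(L_{uv}^k)$ (which one computes from the defining exact sequences of the \AR translates). So the genuinely new work is confined to the tubes, where I would combine the period-structure of a tube with the fact that the three non-trivial trace maps $1+a$, $1+b$, $\mathrm N$ detect the three ``non-diagonal directions''; a short induction along the tube, anchored at its mouth (a lattice whose embedding in $\tM$ is given by an explicit $2\times2$-type gluing), should then yield the required exact intersection. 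I would close by noting the exceptional case $M=L_{++}$ is genuinely excluded because there $M\cap M_{++}=M_{++}$, giving $\hH^0=\mZ/4\mZ\ne(r_{++}M)\mF_2$, consistently with Proposition~\ref{p6}.
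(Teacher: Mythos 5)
Your setup coincides with the paper's: $\hH^0(G,M)=M^G/\mathrm{tr}\,M$, the observation that $M^G=M\cap M_{++}$ and $\mathrm{tr}\,\tM=4M_{++}$, and the reduction to the two equalities $M^G=2M_{++}$ and $\mathrm{tr}\,M=4M_{++}$. But the inclusion $M^G\subseteq 2M_{++}$ --- which you yourself flag as ``the main obstacle'' --- is never actually established: you propose to read it off from the explicit normal forms of \cite{pl} block by block and then run an induction along each tube, and none of that is carried out (``I expect\dots'', ``should then yield\dots''). Since this inclusion is the entire content of the lemma, the argument is incomplete at its crux. Moreover the case-by-case route is unnecessary: there is a short uniform argument. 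If $m\in M^G$ but $m\notin 2M_{++}$, then $m$ generates a $\mZ_2$-direct summand $\mZ_2m'$ of $M_{++}$, and the composite $\th$ of $\pi:M\to M_{++}$ with the projection $M_{++}\to\mZ_2m'$ admits the section $m'\mapsto m$; hence $M\simeq\mZ_2m\+\ker\th$, and indecomposability forces $M=\mZ_2m\simeq L_{++}$, which is excluded. No normal forms, no induction, no separation into preprojective component and tubes.

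Two smaller points. First, your justification of $2M_{++}\subseteq M$ (``the off-diagonal blocks feed a factor of $2$ into the $(+,+)$-block'') is vague; the precise reason is $\rad A=2\tA$, hence $\rad M=2\tM\subseteq M$ for any $A$-lattice $M$ (note also that the statement really requires $M$ to be an $A$-lattice, i.e.\ $M\niso R$, which is automatic for the tube lattices where the lemma is used). Second, the equality $\mathrm{tr}\,M=\mathrm{tr}\,\tM\cap M$ that you assert in passing is exactly the nontrivial surjectivity claim and is not free: one needs that $\pi:M\to M_{++}$ is onto, which holds because $\im\pi$ is an $\tA$-submodule of $M_{++}$ and $\tA\,\im\pi=M_{++}$; then $\mathrm{tr}\,M=\pi(\mathrm{tr}\,M)=\mathrm{tr}\,M_{++}=4M_{++}$. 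Your long-exact-sequence reformulation at the end is consistent with the answer but does not supply either missing computation.
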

 \begin{proof}
  Recall that $\hH^0(G,M)=M^G/tr M$, where $M^G$ is the set of invariants: $M^G=\setsuch{m\in M}{gm=m \text{ for all } g\in G}$, 
  and $tr=\sum_{g\in G}g$.
  Note that $\rad A=2\tA$, hence $2\tM=\rad M\sb M\sbe\tM$. Therefore, $M^G\spe 2M_{++}$. Suppose that $m\in M^G\=2M_{++}$.
  Then $\pi(m)=m'\in M_{++}\=2M_{++}$, where $\pi:M\to M_{++}$ is the projection. Hence $M_{++}=\mZ_2m'\+N$ for some submodule $N\sb M_{++}$. 
  Let $\th:M\to\mZ_2m'$ be the composition of $\pi$ and the projection $M_{++}\to\mZ_2m'$. There is a homomorphism $\eta:\mZ_2m'\to M$
  which maps $m'$ to $m$. Then $\th\eta$ is identity on $\mZ_2m'$, so $M\simeq\mZ_2m\+\ker\th$. As $M$ is indecomposable,
  $M=\mZ_2m\simeq L_{++}$, which is impossible. 
  
  \medskip\noindent
  Therefore, $M^G=2M_{++}$. On the other hand, as $\im\pi=\tA\im\pi$, it coincides with the image of the projection $\tM\to M_{++}$, 
  which is $M_{++}$. Hence $\pi$ is a surjection and its restriction onto $tr M$ is also a surjection $trM\to trM_{++}=4M_{++}$. 
  As $tr M\sbe M^G=2M_{++}$, it implies that $trM=4M_{++}$ and $\hH^0(G,M)=2M_{++}/4M_{++}\simeq(r_{++}M)\mF_2$.
 \end{proof}
 
  \noindent
  Now recall the structure of tubes \cite{pl}.
 \emph{Homogeneous tubes} are parametrized by irreducible unital polynomials $f\in\mF_2[x]$, $f\notin\{x,x-1\}$. The tube $\kT^f$ is of the form
  \[
  \xymatrix{  T_1^f \ar@/^/[r] & T_2^f \ar@/^/[r] \ar@/^/[l] & T_3^f \ar@/^/[r] \ar@/^/[l] & T_4^f \ar@/^/[r] \ar@/^/[l] & \ar@/^/[l] \cdots
  }
  \]
 where $r_{++}(T^f_k)=kd$ for $d=\deg f$ and $\tau_RT_k^f=T_k^f$.
 
 \medskip\noindent
 There are also $3$ \emph{special tubes} $\kT^j\ (j\in\{2,3,4\})$. They are of the form
 \[
  \xymatrix{ T^{j1}_1 \ar[r]  & T^{j1}_2 \ar[r] \ar[dl] & T^{j1}_3 \ar[r] \ar[dl]& T^{j1}_4 \ar[r] \ar[dl] \ar[r] \ar[dl] & \cdots \ar[dl]\\
   T^{j2}_1 \ar[r]  & T^{j2}_2 \ar[r] \ar[ul] & T^{j2}_3 \ar[r] \ar[ul]& T^{j2}_4 \ar[r] \ar[ul] \ar[r] \ar[ul] & \cdots \ar[ul]}
 \]
 where $r_{++}T^{j1}_{2k}=r_{++}T^{j2}_{2k}=k$, $r_{++}T^{j1}_{2k-1}=k,\,r_{++}T^{j2}_{2k-1}=k-1$ and 
  $\tau_RT^{j1}_k=T^{j2}_k,\,\tau_RT^{j2}_k=T^{j1}_k$. 
 
 \medskip\noindent
 Using Lemma~\ref{lem} and Proposition~\ref{p4}, we obtain the following result that accomplishes the calculation of cohomologies of
 lattices over the Kleinian group.
 
 \begin{theorem}\label{tube}  
 \begin{align*}
   \hH^n(G,T_k^f)&=kd\,\mF_2, \text{ \rm where } d=\deg f,\\
   \intertext{and for every $j\in\{2,3,4\}$}
   \hH^n(G,T^{ji}_{2k})&=k\,\mF_2 \text{ \rm for both } i=1,2,\\
   \hH^n(G,T^{j1}_{2k-1})&=
   \begin{cases}
    k\,\mF_2 &\text{\rm if $|n|$ is even},\\
    (k-1)\mF_2 &\text{\rm if $|n|$ is odd},
   \end{cases}\\
   \hH^n(G,T^{j2}_{2k-1})&=
   \begin{cases}
    k\,\mF_2 &\text{\rm if $|n|$ is odd},\\
    (k-1)\mF_2 &\text{\rm if $|n|$ is even}.
   \end{cases}\\
 \end{align*}
 \end{theorem}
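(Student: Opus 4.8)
The plan is to compute $\hH^0$ for all the lattices appearing in the tubes using Lemma~\ref{lem}, and then to propagate to all $\hH^n$ using Proposition~\ref{p4}, exploiting the fact that the \AR translate acts in a very controlled way inside each tube. Concretely, for a homogeneous tube $\kT^f$ we are told $r_{++}(T_k^f)=kd$ with $d=\deg f$ and $\tau_R T_k^f=T_k^f$. Since $T_k^f\niso L_{++}$, Lemma~\ref{lem} gives $\hH^0(G,T_k^f)=kd\,\mF_2$, and since $\tau_R T_k^f\simeq T_k^f$, Proposition~\ref{p4} yields $\hH^n(G,T_k^f)\simeq\hH^{n+1}(G,\tau_R T_k^f)=\hH^{n+1}(G,T_k^f)$ for all $n$ (one still has to check $T_k^f\niso R$, but tube lattices are $A$-lattices, hence $\niso R$). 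Hence all $\hH^n(G,T_k^f)$ are isomorphic to $\hH^0(G,T_k^f)=kd\,\mF_2$, which is the first claim.

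For the special tubes the same two tools apply, but now the translate swaps the two rows: $\tau_R T_k^{j1}=T_k^{j2}$ and $\tau_R T_k^{j2}=T_k^{j1}$. First I would pin down $\hH^0$ from Lemma~\ref{lem} using the given values of $r_{++}$: for the even-indexed lattices $r_{++}T_{2k}^{j1}=r_{++}T_{2k}^{j2}=k$, so $\hH^0(G,T_{2k}^{ji})=k\,\mF_2$; for the odd-indexed ones $r_{++}T_{2k-1}^{j1}=k$ and $r_{++}T_{2k-1}^{j2}=k-1$, so $\hH^0(G,T_{2k-1}^{j1})=k\,\mF_2$ while $\hH^0(G,T_{2k-1}^{j2})=(k-1)\mF_2$. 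Then Proposition~\ref{p4} gives $\hH^n(G,M)\simeq\hH^{n+1}(G,\tau_R M)$, and iterating it twice, since $\tau_R^2$ fixes each special-tube lattice, we get $\hH^n(G,M)\simeq\hH^{n+2}(G,M)$: the cohomology is $2$-periodic in $n$. Combining the two-step periodicity with the one-step shift $\hH^n(G,T_{k}^{j1})\simeq\hH^{n+1}(G,T_{k}^{j2})$ reduces everything to the single known value at $n=0$, and a short bookkeeping of parities produces exactly the stated case distinctions (even $|n|$ versus odd $|n|$, with the two rows interchanged). The even-indexed case is immediate because both rows have the same $r_{++}$, so no parity dependence survives.

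The only genuinely delicate point is the passage from the \AR translate $\tau_R$ over $R=\mZ_2G$ to the combinatorics recorded in the \AR quiver of $A\lat$: the tube picture and the formulas $\tau_R T_k^f=T_k^f$, $\tau_R T_k^{ji}=T_k^{j,3-i}$ are quoted from \cite{pl} and Proposition~\ref{p2} guarantees $\tau_R=\tau_A$ on indecomposable non-projective, non-canonical $A$-lattices, so one must just remark that the tube lattices are never isomorphic to $A$ or to $\om_A$ (they lie in a different component of the \AR quiver), hence $\tau_R$ is defined on them and agrees with the tube structure. Granting that, the proof is a direct assembly of Lemma~\ref{lem} (base case $n=0$) and Proposition~\ref{p4} (induction step in $n$), with the homogeneous tubes giving strict periodicity of period $1$ and the special tubes period $2$. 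I would write it as: fix a lattice, verify it is $\niso R$ and $\niso L_{++}$, read off $r_{++}$, apply Lemma~\ref{lem} to get $\hH^0$, then apply Proposition~\ref{p4} repeatedly, tracking the $\tau_R$-orbit, to reach arbitrary $n$; the parity case split falls out of whether the $\tau_R$-orbit has length $1$ or $2$.
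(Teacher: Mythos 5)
Your proof is correct and is exactly the paper's argument: the paper derives Theorem~\ref{tube} in a single sentence from Lemma~\ref{lem} (giving $\hH^0(G,M)=(r_{++}M)\mF_2$ as the base case) and Proposition~\ref{p4} (shifting along the $\tau_R$-orbit, which is a fixed point for homogeneous tubes and a $2$-cycle swapping the two rows for special tubes), and your write-up supplies precisely the parity bookkeeping and the checks ($M\niso R$, $M\niso L_{++}$) that the paper leaves implicit.
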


\end{document}